\numberwithin{equation}{section}
\theoremstyle{plain}
\newtheorem{thm}{Theorem}[section]
\newcommand{\ba}{{\mathbf a}}
\newcommand{\bb}{{\mathbf b}}
\newcommand{\bc}{{\mathbf c}}
\newcommand{\bd}{{\mathbf d}}
\newcommand{\be}{{\mathbf e}}
\newcommand{\bbf}{{\mathbf f}}
\newcommand{\bx}{{\mathbf x}}
\newcommand{\by}{{\mathbf y}}
\newcommand{\bz}{{\mathbf z}}
\newcommand{\bH}{{\mathbf H}}
\newcommand{\bR}{{\mathbf R}}
\newcommand{\bP}{{\mathbf P}}
\newcommand{\bI}{{\mathbf I}}
\newcommand{\bzeta}{{\bm \zeta}}
\newcommand{\bxi}{{\bm \xi}}
\newcommand{\btheta}{{\bm \theta}}
\newcommand{\bZ}{{\bm 0}}
\newcommand{\Hc}{{\mathcal H}}
\newcommand{\Lc}{{\mathcal L}}
\newcommand{\Mc}{{\mathcal M}}
\newcommand{\Jc}{{\mathcal J}}
\newcommand{\Ic}{{\mathcal I}}
\newcommand{\rank}{\textrm{rank}}
\newcommand{\BesselI}{I}
\begin{document}

\begin{frontmatter}
\title{Sensor fusion for bimodal generalized likelihood ratio test with unknown noise variances}
\runtitle{Bimodal fused GLRT with unknown variances}

\begin{aug}
\author{\fnms{Borislav N.} \snm{Oreshkin}\thanksref{m1}\ead[label=e1]{boris.oreshkin@mail.mcgill.ca}},
\author{\fnms{Ekaterina Y.} \snm{Turkina}\thanksref{m2}\ead[label=e2]{ekaterina.turkina@hec.ca}}

\runauthor{Oreshkin and Turkina}

\affiliation{Universit\'{e} de Montr\'{e}al\thanksmark{m1} and HEC Montr\'{e}al\thanksmark{m2}}

\address{D\'{e}partement d'Informatique\\
et de Recherche Op\'{e}rationnelle\\
Universit\'{e} de Montr\'{e}al\\
Pavillon Andr\'{e}-Aisenstadt\\
CP 6128 succ Centre-Ville\\
Montr\'{e}al, Qu\'{e}bec\\
H3C 3J7, CANADA \\
\printead{e1}
\phantom{E-mail:\ }}

\address{Department of International Business\\
HEC Montr\'{e}al \\
3000, chemin de la C\^{o}te-Sainte-Catherine\\
Montr\'{e}al,  Qu\'{e}bec\\
H3T 2A7, CANADA \\
\printead{e2}
\phantom{E-mail:\ }}

\end{aug}

\begin{abstract}
In this paper we address the problem of sensor fusion. We formulate the joint detection problem using a general linear observation model and inter-modality independence assumption for noises. We derive the fusion architecture based on the generalized likelihood ratio principle and calculate the expressions for the distributions of the test statistic under the signal present and the null hypotheses. To obtain these results we develop a methodology for the joint detection algorithm analysis based on the theory of the Meijer G-function.
\end{abstract}

\begin{keyword}[class=AMS]
\kwd[Primary ]{60G35}
\kwd{93E10}
\end{keyword}

\begin{keyword}
\kwd{Sensor fusion}
\kwd{Generalized likelihood ratio test}
\kwd{Meijer G-function}
\end{keyword}

\end{frontmatter}

\section{Introduction}
\label{sec:Introduction}

 This paper focuses on joint detection with parameter uncertainty. Joint detection involves fusion of data from several sensors (measurement modalities) and is often necessary, because a single sensor has too low detection probability (high false alarm rate). Joint detection has wide range of applications. For example, it is used in landmine detection~\cite{Cremer2001}, multimodality breast cancer detection~\cite{Kir2011} and multisite radar~\cite{Chernyak1998}. In this paper we significantly generalize and extend the statistical analysis developed by Kirshin et al.~\cite{Kir2011}, where a joint breast caner detection system using two sensor modalities (ultrawide-band radar and microwave-induced thermoacoustics) was presented. The main focus of the paper was on numerical experiments showing the potential of the joint detection system. Our current paper focuses on developing a general data-level fusion rule based on the
generalized maximum likelihood (GLR) approach and on thoroughly analyzing the distributions of the resulting test statistic. The contribution of our paper is thus (i) the development of a new class of GLR based probabilistic fusion rules, (ii) theoretical analysis of their detection performance, (iii) methodology for the analysis of the fusion rules based on the theory of the Meijer G-function. Although our study was motivated by the concrete application described in~\cite{Kir2011}, we believe that the results presented in this paper have much more general applicability. They can be used for the statistical analysis and design of a wide range of sensor fusion systems that can be described by the general signal model presented in section~\ref{sec:Problem_Statement}.

The rest of the paper is organized as follows. Section~\ref{sec:Problem_Statement} formally defines the signal models and the problem to
be solved. Section~\ref{sec:GLRT_based_fusion_rule} describes the GLR based fusion rule and Section~\ref{sec:Distribution_of_the_test_statistic} analyzes the distributions of the GLR based fused test statistic. Section~\ref{sec:Discussion_of_Results} provides discussion of our results and Section~\ref{section::Concluding Remarks} concludes the paper.

\section{Problem Statement}
\label{sec:Problem_Statement}

In this paper we consider the classical linear observation model
resulting in the following quasi-deterministic signal description
under signal present hypothesis $\Hc_1$:
\begin{align}
\bx &= \bH_{\bx} \btheta_{\bx} + \bxi, \label{E:r_sig_mod_1} \\
\by &= \bH_{\by} \btheta_{\by} + \bzeta. \label{E:r_sig_mod_2}
\end{align}
Here $\bx = [\bx_{1}, \ldots, \bx_{N}]^T$ and $\by = [\by_{1},
\ldots, \by_{M}]^T$ are waveforms observed by two different sensors
when hypothesis $\Hc_1$ (signal present) holds true. These waveforms
consist of the signal contributions given by the observation
matrices $\bH_{\bx}$ and $\bH_{\by}$ and two sets of unknown
deterministic parameters $\btheta_{\bx}$ and $\btheta_{\by}$; and
interfering Gaussian noises $\bxi$ and $\bzeta$ with zero mean and
covariance matrices $\sigma^2_{\bxi}\bR_{\bxi}$ and
$\sigma^2_{\bzeta}\bR_{\bzeta}$. Note that the adopted general
classical linear observation model contains many important detection
problems as special cases. For example, signals with unknown
amplitude and/or phase, signal with unknown arrival time and/or
frequency, signals received by an antenna array can all be
represented using this model via proper choice of observation matrix
$\bH$ and parametrization $\btheta$.

In this paper we assume that noises $\bxi$ and $\bzeta$ are independent and that the $\Hc_0$ hypothesis corresponds to the noise only observation scenario: $\bx = \bxi$, $\by = \bzeta$. The independency assumption can be justified in many practical situations. For example, when physics that govern measurement process are significantly different for the two sensors or measurements are significantly separated in space, time, or
frequency domains, this assumption holds. In fact, from the system
design perspective that would be the best sensor configuration, when
sensor fusion has potential to provide significant information gain.
On the contrary, little fusion gain is to be expected when sensor
noises are strongly correlated. We also assume that the noise
covariance matrices are known up to the scaling factors
$\sigma^2_{\bxi}$ and $\sigma^2_{\bzeta}$ and we treat these as the
nuisance parameters.

The goal of this paper is to derive the fusion rule for the
Generalized Likelihood Ratio Test (GLRT) based detector and to obtain
the exact non-asymptotic expressions for the test statistic
probability density functions (PDFs) under both $\Hc_0$ and $\Hc_1$.

\section{GLRT based fusion rule}
\label{sec:GLRT_based_fusion_rule}

The GLRT performs the comparison of the GLR $L_G(\bx,\by)$ against
the threshold $\gamma$:
\begin{align} \label{eq:Tst}
L_G(\bx,\by)
\overset{\mathcal{H}_{1}}{\underset{\mathcal{H}_{0}}{\gtrless}} ~
\gamma.
\end{align}
The GLR is obtained by plugging the maximum likelihood estimates
(MLEs) of unknown parameters under each hypothesis into the
likelihood ratio~\cite{Kay1998}. Under the assumptions stipulated in
Section~\ref{sec:Problem_Statement}, $L_G(\bx,\by)$ can be
factorized as follows:
\begin{align}
L_G(\bx,\by) = \frac{
p(\bx|\hat{\theta}_{\bx},\hat{\sigma}^2_{\bxi,1};\Hc_1)}{p(\bx|\hat{\sigma}^2_{\bxi,0};\Hc_0)}
\frac{p(\by|\hat{\theta}_{\by},\hat{\sigma}^2_{\bzeta,1};\Hc_1) }{
p(\by|\hat{\sigma}^2_{\bzeta,0};\Hc_0) }~. \label{eq:L_G_fact}
\end{align}
MLEs of the unknown parameters are presented in Appendix~\ref{app:MLEs}. Substituting them into~\eqref{eq:L_G_fact} results in:
\begin{align} \label{eq:L_G_final}
  \displaystyle
    L_G(\bx,\by) &= \left( \frac{\bx^T \bR_{\bxi}^{-1} \bx}{\bx^T \bR_{\bxi}^{-1/2} \mathbf{P}_{\bx}^{\bot} \bR_{\bxi}^{-1/2} \bx}
    \right)^{\frac{N}{2}}
    \left( \frac{\by^T \bR_{\bzeta}^{-1} \by}{\by^T \bR_{\bzeta}^{-1/2} \mathbf{P}_{\by}^{\bot} \bR_{\bzeta}^{-1/2} \by} \right)^{\frac{M}{2}}
\end{align}
The test statistic $L_G(\bx,\by)$ can thus be represented as the
product of two exponentiated random variables, $Z =
Z_{\bx}^{\frac{N}{2}} Z_{\by}^{\frac{M}{2}}$, of the form
\begin{align}
Z_{\bx} = \frac{\bx^T \bR_{\bxi}^{-1} \bx}{\bx^T \bR_{\bxi}^{-1/2}
\mathbf{P}_{\bx}^{\bot} \bR_{\bxi}^{-1/2} \bx} \text{ and } Z_{\by}
= \frac{\by^T \bR_{\bzeta}^{-1} \by}{\by^T \bR_{\bzeta}^{-1/2}
\mathbf{P}_{\by}^{\bot} \bR_{\bzeta}^{-1/2} \by}
\end{align}
Using the properties of signal projection matrices $\mathbf{P}_{\bx}$ and $\mathbf{P}_{\by}$ outlined
in Appendix~\ref{app:MLEs}, random variables $Z_{\bx}$ and $Z_{\by}$ can be further
represented as the following configuration of independent random
variables:
\begin{align} \label{eqn:individual_test_statistics}
Z_{\bx} = \frac{S_{\bx} + R_{\bx}}{S_{\bx}}, Z_{\by} = \frac{S_{\by}
+ R_{\by}}{S_{\by}}.
\end{align}
Here $S_{\bx} = \bx^T \bR_{\bxi}^{-1/2} \mathbf{P}_{\bx}^{\bot}
\bR_{\bxi}^{-1/2} \bx$, $R_{\bx} = \bx^T \bR_{\bxi}^{-1/2}
\mathbf{P}_{\bx} \bR_{\bxi}^{-1/2} \bx$ and $S_{\by} = \by^T
\bR_{\bzeta}^{-1/2} \mathbf{P}_{\by}^{\bot} \bR_{\bzeta}^{-1/2}
\by$, $R_{\by} = \by^T \bR_{\bzeta}^{-1/2} \mathbf{P}_{\by}
\bR_{\bzeta}^{-1/2} \by$.

It is interesting to note that $Z_{\bx}$ and $Z_{\by}$ are the GLRT
test statistics for the individual samples $\bx$ and $\by$
respectively. Since transformation $(\cdot)^{1/(N/2 + M/2)}$ is a
monotonically increasing function, it is not hard to see that the
test statistic $L_G(\bx,\by)^{1/(N/2 + M/2)}$ is equivalent
to~\eqref{eq:L_G_final}. The GLRT based joint processing thus leads
to the weighted geometric mean based fusion architecture. This
statement can be straightforwardly generalized to the multi-sensor
setting with samples $\bx, \by, \bz, \ldots$

\section{Distributions of the fused test statistic}
\label{sec:Distribution_of_the_test_statistic}

In this section we derive the distributions of the test statistic
under hypotheses $\Hc_0$ and $\Hc_1$. We will rely heavily on the
apparatus of Meijer G-functions introduced and studied by the Dutch
mathematician C. S. Meijer~\cite{Meijer1946} and defined as
Mellin-Barnes integrals of the form~\cite{Adamchik90}:
\begin{align}
G^{m,n}_{p,q}\left( x \left|
\begin{matrix} \ba_p \\
\bb_q
\end{matrix} \; \right.  \right) = \frac{1}{2\pi i} \int_{C} g(\ba_p, \bb_q, \eta) x^{-\eta}
d\eta,
\end{align}
where
\begin{align}
g(\ba_p, \bb_q, \eta) =  \frac{\prod_{j=1}^m(b_j+\eta)
\prod_{j=1}^n(1-a_j-\eta)}{ \prod_{j=n+1}^p(a_j+\eta)
\prod_{j=m+1}^q(1-b_j-\eta)}.
\end{align}
For the convenience of the reader in Appendix~\ref{app:MeijerG} we provide some key
identities and the G-function related notation that will be further used in the proofs.

\subsection{Fused test statistic under $\Hc_0$}
\label{ssec:Test_statistic_under_H0}

Under $\mathcal{H}_0$ we have that the components of the fused test
statistic: $S_{\bx}$, $R_{\bx}$ and $S_{\by}$, $R_{\by}$, defined
in~\eqref{eqn:individual_test_statistics}, are central chi-square
distributed random variates with $c_{\bx} = N-\rank(\bP_{\bx})$,
$d_{\bx}=\rank(\bP_{\bx})$ and $c_{\by} = M-\rank(\bP_{\by})$,
$d_{\by}=\rank(\bP_{\by})$ degrees of freedom
respectively~\cite{Kay1998}. In this section we are interested in
the $\Hc_0$ distribution of the derived test statistic represented
as the random variable $Z = Z_{\bx}^{\frac{N}{2}}
Z_{\by}^{\frac{M}{2}}$.

\begin{thm}
The PDF and the CDF of the random variable $Z$ under hypothesis $\Hc_0$ have the following expressions, respectively:
\begin{align} \label{eqn:pdf_H0}
p_{Z|\Hc_0}(z|\Hc_0) &= \frac{2 \Gamma(\frac{c_{\bx}+d_{\bx}}{2})
\Gamma(\frac{c_{\by}+d_{\by}}{2})}{N^{\frac{d_{\bx}}{2}}
M^{\frac{d_{\by}}{2}} \Gamma(\frac{c_{\bx}}{2})
\Gamma(\frac{c_{\by}}{2})} z^{\frac{2}{N}-1} \nonumber\\
&\times G^{0,M+N}_{M+N,M+N}\left( z^2 \left|
\begin{matrix} \Delta(N,-\frac{c_{\bx}}{2}), \Delta(M,1-\frac{c_{\by}}{2}-\frac{M}{N}) \\
\Delta(N,-\frac{c_{\bx}}{2}-\frac{d_{\bx}}{2}),
\Delta(M,1-\frac{c_{\by}}{2}-\frac{d_{\by}}{2}-\frac{M}{N})
\end{matrix} \; \right.  \right).
\end{align}
\begin{align} \label{eqn:probability_of_false_alarm}
P_{Z|\Hc_0}(z|\Hc_0) &= \frac{\Gamma(\frac{c_{\bx}+d_{\bx}}{2})
\Gamma(\frac{c_{\by}+d_{\by}}{2})}{N^{\frac{d_{\bx}}{2}}
M^{\frac{d_{\by}}{2}} \Gamma(\frac{c_{\bx}}{2})
\Gamma(\frac{c_{\by}}{2})} z^{\frac{2}{N}} G^{0,M+N+1}_{M+N+1,M+N+1} \nonumber\\
&\times \left( z^2 \left|
\begin{matrix} \Delta(N,-\frac{c_{\bx}}{2}), \Delta(M,1-\frac{c_{\by}}{2}-\frac{M}{N}), 1-\frac{1}{N} \\
\Delta(N,-\frac{c_{\bx}}{2}-\frac{d_{\bx}}{2}),
\Delta(M,1-\frac{c_{\by}}{2}-\frac{d_{\by}}{2}-\frac{M}{N}),
-\frac{1}{N}
\end{matrix} \; \right.  \right).
\end{align}
\end{thm}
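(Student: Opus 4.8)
The plan is to compute the Mellin transform of $Z$, which turns the product $Z=Z_{\bx}^{N/2}Z_{\by}^{M/2}$ with $Z_{\bx}\perp Z_{\by}$ into a product of two one-dimensional transforms, and then to invert it by recognising a Mellin--Barnes integral. First I would identify the marginal laws: under $\Hc_0$ the variates $S_{\bx}\sim\chi^2_{c_{\bx}}$ and $R_{\bx}\sim\chi^2_{d_{\bx}}$ are independent, so $S_{\bx}/(S_{\bx}+R_{\bx})\sim\mathrm{Beta}(c_{\bx}/2,d_{\bx}/2)$ and $Z_{\bx}=1+R_{\bx}/S_{\bx}$ is the reciprocal of this Beta variate, and likewise for $Z_{\by}$ with $(c_{\by},d_{\by})$. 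From the moment formula for the Beta law, $E[Z_{\bx}^{t}]=\Gamma(\tfrac{c_{\bx}}{2}-t)\Gamma(\tfrac{c_{\bx}+d_{\bx}}{2})\big/\big[\Gamma(\tfrac{c_{\bx}}{2})\Gamma(\tfrac{c_{\bx}+d_{\bx}}{2}-t)\big]$ for $t<c_{\bx}/2$, and analogously for $Z_{\by}$; hence $E[Z^{s-1}]=E[Z_{\bx}^{(N/2)(s-1)}]E[Z_{\by}^{(M/2)(s-1)}]$ is an explicit ratio of four Gamma functions whose arguments are affine in $s$ with slopes $-N/2$ or $-M/2$, holomorphic on a vertical strip around $s=1$.

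The next step removes those slopes. I would apply the Gauss--Legendre multiplication formula to each of the four Gamma factors, rewriting for instance $\Gamma(\tfrac{c_{\bx}}{2}-\tfrac{N}{2}(s-1))$ as a product of $N$ Gamma functions of unit slope in $s$, and likewise producing $N,N,M,M$ factors from the four. The $(2\pi)^{(1-N)/2}$ and $(2\pi)^{(1-M)/2}$ prefactors and the $s$-dependent powers of $N$ and $M$ cancel between numerator and denominator, leaving exactly the constant $\Gamma(\tfrac{c_{\bx}+d_{\bx}}{2})\Gamma(\tfrac{c_{\by}+d_{\by}}{2})\big/\big[N^{d_{\bx}/2}M^{d_{\by}/2}\Gamma(\tfrac{c_{\bx}}{2})\Gamma(\tfrac{c_{\by}}{2})\big]$, multiplied by a ratio of $M+N$ numerator and $M+N$ denominator unit-slope Gamma functions --- a Mellin--Barnes integrand of a $G^{0,M+N}_{M+N,M+N}$. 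Performing the change of variable $s=2\eta-2/N+1$ in the inversion integral $p_Z(z)=\tfrac{1}{2\pi i}\int E[Z^{s-1}]z^{-s}\,ds$ produces the Jacobian $2$, the prefactor $z^{2/N-1}$, and the argument $(z^2)^{-\eta}$; reading off the upper and lower parameter lists and matching them to the $\Delta(N,\cdot)$ and $\Delta(M,\cdot)$ blocks --- the $M/N$ offset in the $\by$ block coming from the interaction of the $1/N$ translation with the factor-of-$M$ normalisation in the multiplication formula --- yields \eqref{eqn:pdf_H0}.

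For the CDF I would integrate the density, $P_{Z|\Hc_0}(z|\Hc_0)=\int_0^z p_{Z|\Hc_0}(t|\Hc_0)\,dt$, substitute $v=t^2$ to obtain $\tfrac12\int_0^{z^2}v^{1/N-1}G^{0,M+N}_{M+N,M+N}(v\mid\cdots)\,dv$ (the $\tfrac12$ cancelling the $2$ in the density), and apply the standard finite-integral identity for the Meijer G-function recalled in Appendix~\ref{app:MeijerG}, with exponent parameter $1/N$. This multiplies by $(z^2)^{1/N}=z^{2/N}$, raises all four orders by one to $M+N+1$, and appends $1-1/N$ to the upper list and $-1/N$ to the lower list, giving \eqref{eqn:probability_of_false_alarm}.

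I expect the real work to be bookkeeping rather than conceptual: keeping the four $\Delta$-lists, their re-orderings, and the $1/N$ and $M/N$ shifts consistent through the multiplication formula and the affine substitution, and checking that the inversion contour lies in the common strip where all the negative-moment expressions converge, so that the termwise manipulations, the standard convergence and admissibility conditions for the resulting G-function, and the interchange of integration in the CDF step are all legitimate.
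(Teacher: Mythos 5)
Your proposal is correct, but it takes a genuinely different route from the paper's own proof. The paper works at the level of distributions: it writes the joint law of $(S_{\by},\,R_{\by}+S_{\by})$ as McKay's bivariate gamma in bivariate H-function form, invokes Kellogg and Barnes' quotient theorem to obtain the $G^{1,0}_{1,1}$ (Beta) law of $Z_{\by}^{-1}$, uses Jacobian transformations to get the laws of $Z_{\bx}$ and $Z_{\by}^{M/N}$, performs the multiplicative convolution explicitly via the definite-integration identity~\eqref{eqn:G-function_definite_integration_identity} (which is where the $\Delta(k,\cdot)$ parameter lists arise), and finishes with the power transformation $(\cdot)^{N/2}$. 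You instead stay in the Mellin domain throughout: $Z_{\bx},Z_{\by}$ are reciprocal Beta variates, $E[Z^{s-1}]$ is a ratio of four Gamma functions, the Gauss multiplication formula converts it into $M+N$ unit-type Gamma ratios, and the affine substitution $s=2\eta-\tfrac{2}{N}+1$ in the inversion integral yields exactly the constant $N^{-d_{\bx}/2}M^{-d_{\by}/2}\Gamma(\tfrac{c_{\bx}+d_{\bx}}{2})\Gamma(\tfrac{c_{\by}+d_{\by}}{2})/[\Gamma(\tfrac{c_{\bx}}{2})\Gamma(\tfrac{c_{\by}}{2})]$, the factor $2z^{2/N-1}$, and the parameter blocks $\Delta(N,-\tfrac{c_{\bx}}{2})$, $\Delta(M,1-\tfrac{c_{\by}}{2}-\tfrac{M}{N})$ (and their lower-list counterparts), so \eqref{eqn:pdf_H0} is reproduced exactly; your CDF step coincides with the paper's (substitution $v=t^2$ and identity~\eqref{eqn:G-function_indefinite_integration_identity} with $\alpha=1/N$; the placement of $-\tfrac{1}{N}$ within the lower list is immaterial because $m=0$). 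The two arguments are of course cousins --- the paper's definite-integration identity is essentially Mellin convolution plus the multiplication theorem packaged as a table formula --- but your version buys a shorter path: no bivariate H-function, no Kellogg--Barnes theorem, no explicit convolution integral, and the exponents $N/2$, $M/2$ are absorbed trivially in the transform. The one point you should make explicit is the justification of the Mellin inversion itself: on vertical lines the integrand decays like $|\mathrm{Im}\,\eta|^{-(d_{\bx}+d_{\by})/2}$, so when $d_{\bx}+d_{\by}\le 2$ the inversion should be asserted at continuity points (or applied first to the CDF) --- a caveat of the same formal character as the interchanges of integration already present in the paper's proof.
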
 \label{thm:1}

\begin{proof}
The outline of the proof that appears in Appendix~\ref{app:Proof_1} is as
follows: 1) represent $Z_{\by}$ via joint distribution of
$R_{\by}+S_{\by}$ and $S_{\by}$ in terms of H-function of two
variables 2) find the distribution of $Z_{\by}^{-1}$ using Theorem
4.1, case IV from Kellogg and Barnes \cite{Kellogg1987} 3) apply
random variable transformation to calculate the distribution of
$Z_{\by}^{M/N}$, 4) repeat these steps for $Z_{\bx}$, 5) find the
distribution of $Z_{\bx} Z_{\by}^{M/N}$ via multiplicative
convolution, 6) find the distribution of $(Z_{\bx}
Z_{\by}^{M/N})^{N/2} = Z_{\bx}^{N/2} Z_{\by}^{M/2}$ via Jacobian
method for random variable transformations.
\end{proof}

Note that $1-P_{Z|\Hc_0}(z|\Hc_0)$ provides us with the expression
for the probability of false alarm for the fused test
statistic~\eqref{eq:L_G_final}.

\subsection{Fused test statistic under $\Hc_1$}
\label{ssec:Test_statistic_under_H1}

Under $\mathcal{H}_1$ we have that the components of the fused test
statistic $S_{\bx}$ and $S_{\by}$ are central chi-square distributed
random variates with $c_{\bx} = N-\rank(\bP_{\bx})$ and $c_{\by} =
M-\rank(\bP_{\by})$ degrees of freedom respectively. The components
$R_{\bx}$, $R_{\by}$ are non-central chi-square variates with
degrees of freedom $d_{\bx}=\rank(\bP_{\bx})$,
$d_{\by}=\rank(\bP_{\by})$ and non-centrality parameters
$\lambda_{\bx} = \btheta_{\bx}^T \bH_{\bx}^T \bR_{\bxi}^{-1}
\bH_{\bx} \btheta_{\bx}$, $\lambda_{\by} = \btheta_{\by}^T
\bH_{\by}^T \bR_{\bzeta}^{-1} \bH_{\by} \btheta_{\by}$ respectively~\cite{Kay1998}.

In this section we are interested in the $\Hc_1$ distribution of the
fused test statistic represented, as before, as the random variable $Z =
Z_{\bx}^{\frac{N}{2}} Z_{\by}^{\frac{M}{2}}$. Under $\Hc_1$,
$Z_{\bx}$ and $Z_{\by}$ contain non-centrally distributed components
and approach we used in section~\ref{ssec:Test_statistic_under_H0}
does not seem to be applicable. We thus exploit a different technique
to identify the fused test statistic distribution under $\Hc_1$. This technique is summarized in the outline of the proof of Theorem~\ref{theorem:2}.

\begin{thm} \label{theorem:2}
The PDF and the CDF of the random variable $Z$ under hypothesis $\Hc_1$ have the following expressions, respectively:
\begin{align} \label{eqn:pdf_H1}
p_{Z |\Hc_1}(z|\Hc_1) &= \frac{2 C_{\bx} C_{\by}}{N
z^{\frac{c_{\bx}}{N}+1}} \sum_{k=0}^\infty
\frac{z^{-\frac{2k}{N}}}{k!} G^{2,1}_{3,4}\left(
\frac{\lambda_{\bx}}{2} \left|
\begin{matrix} \ba_{\bx}, 0 \\
k, \bb_{\bx} \end{matrix} \; \right.  \right) G^{1,2}_{3,4}\left(
\frac{\lambda_{\by}}{2} \left|
\begin{matrix} 0, \ba_{\by} \\
\bb_{\by}, k^\prime
\end{matrix} \; \right.  \right) \Gamma(-k^\prime) \nonumber\\
&+ \frac{2 C_{\bx} C_{\by}}{M z^{\frac{c_{\by}}{M}+1}}
\sum_{m=0}^\infty \frac{z^{-\frac{2m}{M}}}{m!} G^{2,1}_{3,4}\left(
\frac{\lambda_{\by}}{2} \left|
\begin{matrix} \ba_{\by}, 0 \\
m, \bb_{\by} \end{matrix} \; \right.  \right)  G^{1,2}_{3,4}\left(
\frac{\lambda_{\bx}}{2} \left|
\begin{matrix} 0, \ba_{\bx} \\
\bb_{\bx}, m^\prime
\end{matrix} \; \right.  \right) \Gamma(-m^\prime).
\end{align}
\begin{align} \label{eqn:probability_of_detection_fused}
P_{Z |\Hc_1}(z|\Hc_1) &= 1
\nonumber\\
&- \frac{C_{\bx} C_{\by}}{z^{\frac{c_{\bx}}{N}}} \sum_{k=0}^\infty
\frac{z^{-\frac{2k}{N}}}{k!} G^{2,1}_{3,4}\left(
\frac{\lambda_{\bx}}{2} \left|
\begin{matrix} \ba_{\bx}, 0 \\
k, \bb_{\bx} \end{matrix} \; \right.  \right) G^{1,2}_{3,4}\left(
\frac{\lambda_{\by}}{2} \left|
\begin{matrix} 0, \ba_{\by} \\
\bb_{\by}, k^\prime
\end{matrix} \; \right.  \right) \frac{\Gamma(-k^\prime)}{\frac{c_{\bx}}{2}+k} \nonumber\\
&- \frac{C_{\bx} C_{\by}}{z^{\frac{c_{\by}}{M}}} \sum_{m=0}^\infty
\frac{z^{-\frac{2m}{M}}}{m!} G^{2,1}_{3,4}\left(
\frac{\lambda_{\by}}{2} \left|
\begin{matrix} \ba_{\by}, 0 \\
m, \bb_{\by} \end{matrix} \; \right.  \right)  G^{1,2}_{3,4}\left(
\frac{\lambda_{\bx}}{2} \left|
\begin{matrix} 0, \ba_{\bx} \\
\bb_{\bx}, m^\prime
\end{matrix} \; \right.  \right) \frac{\Gamma(-m^\prime)}{\frac{c_{\by}}{2}+m}.
\end{align}
With indices $m^\prime = m\frac{N}{M} +\frac{c_{\by}N}{2M}-\frac{c_{\bx}}{2}$, $k^\prime = k\frac{M}{N} +\frac{c_{\bx}M}{2N}-\frac{c_{\by}}{2}$; constants $C_{\by} = \frac{\pi 2^{\frac{d_{\by}}{2}-1}e^{-\frac{\lambda_{\by}}{2}}}{\Gamma(\frac{c_{\by}}{2}) \lambda_{\by}^{^{\frac{d_{\by}}{2}-1}}}$, $C_{\bx} = \frac{\pi 2^{\frac{d_{\bx}}{2}-1}e^{-\frac{\lambda_{\bx}}{2}}}{\Gamma(\frac{c_{\bx}}{2}) \lambda_{\bx}^{^{\frac{d_{\bx}}{2}-1}}}$ and coefficient vectors $\ba_{\bx} = [-\frac{c_{\bx}}{2},  \frac{d_{\bx}-1}{2}]$, $\bb_{\bx} =
[\frac{d_{\bx}}{2}-1, 0, \frac{d_{\bx}-1}{2}]$ and $\ba_{\by} = [-\frac{c_{\by}}{2},  \frac{d_{\by}-1}{2}]$, $\bb_{\by} =
[\frac{d_{\by}}{2}-1, 0, \frac{d_{\by}-1}{2}]$.
\end{thm}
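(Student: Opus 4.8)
The plan is to obtain the law of $Z$ from its Mellin transform (moment function), recognise that transform as a product of Mellin–Barnes–type factors, and invert it by residues. Since $\bx$ and $\by$ (hence $Z_{\bx}$ and $Z_{\by}$) are independent, $Z=Z_{\bx}^{N/2}Z_{\by}^{M/2}$ is a product of independent variables and its Mellin transform factorises, $\mathcal{M}_Z(s)=E[Z^{s-1}]=E[Z_{\bx}^{\frac N2(s-1)}]\,E[Z_{\by}^{\frac M2(s-1)}]$. So the first task is $E[Z_{\bx}^{w}]$. Writing $Z_{\bx}^{-1}=S_{\bx}/(S_{\bx}+R_{\bx})$ and using the Poisson$(\lambda_{\bx}/2)$ mixture representation of the non-central $\chi^2$ variate $R_{\bx}$, conditionally on the mixing index $k$ the variable $Z_{\bx}^{-1}$ is $\mathrm{Beta}(c_{\bx}/2,\,d_{\bx}/2+k)$, so $E[Z_{\bx}^{w}\mid k]$ is the elementary ratio $\Gamma(\tfrac{c_{\bx}}{2}-w)\Gamma(\tfrac{c_{\bx}+d_{\bx}}{2}+k)\big/\big(\Gamma(\tfrac{c_{\bx}}{2})\Gamma(\tfrac{c_{\bx}+d_{\bx}}{2}+k-w)\big)$; summing the Poisson series collapses the $k$-sum to a confluent hypergeometric ${}_1F_1$ in $\lambda_{\bx}/2$ (a Kummer transformation relocates the $e^{-\lambda_{\bx}/2}$ factor), which is then rewritten as a Meijer $G$-function with argument $\lambda_{\bx}/2$ and with $w$ carried in its parameter list. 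The same computation with $\bx\to\by$, $N\to M$ gives $E[Z_{\by}^{w'}]$.

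Substituting $w=\tfrac N2(s-1)$ and $w'=\tfrac M2(s-1)$, $\mathcal{M}_Z(s)$ becomes a product of two Gamma ratios and two $G$-factors, and the density is recovered by the inverse Mellin integral $p_{Z|\Hc_1}(z)=\tfrac1{2\pi i}\int_{\mathcal C}\mathcal{M}_Z(s)z^{-s}\,ds$ over a vertical line in the strip of analyticity. Because $Z\ge1$ almost surely, $|z^{-s}|=z^{-\operatorname{Re}s}$ decays as $\operatorname{Re}s\to+\infty$, so the contour is closed to the right and the integral equals a sum of residues. The poles of $\mathcal{M}_Z(s)$ form two arithmetic families: those of $\Gamma(\tfrac{c_{\bx}}{2}-\tfrac N2(s-1))$ at $s=1+\tfrac{c_{\bx}+2k}{N}$, $k\ge0$, and those of $\Gamma(\tfrac{c_{\by}}{2}-\tfrac M2(s-1))$ at $s=1+\tfrac{c_{\by}+2m}{M}$, $m\ge0$ (the $G$-factors, being $\Gamma$-regularised confluent functions, supply no further $s$-poles). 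The residue at the $k$-th pole of the first family yields the factor $\tfrac{(-1)^k}{k!}$ (which with the exponentials becomes the constant $C_{\bx}$ up to elementary powers of $2$ and $\pi$), the power $z^{-c_{\bx}/N-1-2k/N}$, the $\bx$-$G$-function with the residue index $k$ inserted in its lower parameter vector, and the $\by$-factor evaluated on shell at $s=1+\tfrac{c_{\bx}+2k}{N}$ — where $\tfrac{c_{\by}}{2}-\tfrac M2(s-1)=-k'$ with $k'=k\tfrac MN+\tfrac{c_{\bx}M}{2N}-\tfrac{c_{\by}}{2}$, so the $\by$ Gamma ratio contributes exactly $\Gamma(-k')$ and the $\by$-$G$-function the index $k'$. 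Summing over $k$ produces the first series of \eqref{eqn:pdf_H1}; the second series is the mirror image obtained from the poles of the second family. Matching parameters against $\ba_{\bx},\bb_{\bx},\ba_{\by},\bb_{\by}$ (the duplicated entry $\tfrac{d_{\bx}-1}{2}$ appearing in both $\ba_{\bx}$ and $\bb_{\bx}$ signals the Gamma-duplication identity used to normalise the $G$-function orders to $G_{3,4}$) finishes the density.

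For the CDF one integrates \eqref{eqn:pdf_H1} term by term on $[1,z]$: each monomial $\zeta^{-c_{\bx}/N-1-2k/N}$ integrates to $\tfrac{N}{c_{\bx}+2k}\big(1-\zeta^{-c_{\bx}/N-2k/N}\big)$ at $\zeta=z$, which generates the extra denominators $\tfrac{c_{\bx}}{2}+k$ and $\tfrac{c_{\by}}{2}+m$ of \eqref{eqn:probability_of_detection_fused}, and, via $P_{Z|\Hc_1}(z)=1-P(Z>z)$, the leading $1$ together with the normalisation identity $\lim_{z\to1^+}P(Z>z)=1$; equivalently one divides $\mathcal{M}_Z$ by the appropriate $(s-1)$-type factor before inverting, which shifts the residue output by precisely these factors.

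I expect the main obstacle to be Step 1 together with the parameter bookkeeping: bringing $E[Z_{\bx}^{w}]$ into the exact Meijer-$G$ normalisation so that, after the substitution $w=\tfrac N2(s-1)$ and the residue extraction, the parameter vectors come out precisely as stated — in particular tracking the constants $C_{\bx},C_{\by}$ and the interplay of Kummer's transformation with the duplication formula — and, on the analytic side, justifying rigorously the interchange of the Poisson summation with the Mellin inversion, the admissibility of closing the contour (decay of the Gamma ratios and $G$-functions along vertical lines, absence of other poles, treatment of coincident poles $1+\tfrac{c_{\bx}+2k}{N}=1+\tfrac{c_{\by}+2m}{M}$ by a limiting argument), and the convergence of the two residue series.
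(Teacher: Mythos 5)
Your proposal is sound, but it follows a genuinely different route from the paper. The paper never forms the Mellin transform of $Z$: it builds the joint density of $(U,W)=(R_{\by}+S_{\by},S_{\by})$, computes the moment generating function of $W/U$ by expanding the Mellin--Barnes integrand in a series, inverts the Laplace transform to get the density of $Z_{\by}^{-1}$ as a $G^{1,1}_{2,3}$ of argument $\tfrac{\lambda_{\by}}{2}(1-z)$, applies power transformations, and then evaluates the multiplicative convolution of $Z_{\bx}^{-N/2}$ and $Z_{\by}^{-M/2}$ by a double series expansion whose inner sum is collapsed in closed form, producing the $\Gamma(-k')$, $\Gamma(-m')$ factors; the CDF is then obtained by term-wise integration on $[1,z]$, exactly as you do. Your route instead exploits the Poisson-mixture/Beta representation to write $E[Z_{\bx}^{w}]$ in closed form (a gamma ratio times a regularized ${}_1F_1$, entire in $w$), factorizes $\Mc_Z(s)$ by independence, and recovers the density by residue inversion of the Mellin integral, the two series of \eqref{eqn:pdf_H1} arising transparently as the two arithmetic families of poles, with $k'$ and $m'$ appearing as the on-shell parameter values of the opposite factor. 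This is arguably cleaner probabilistically: it avoids the bivariate H-function of McKay's distribution, the inverse Laplace step, and the double-series manipulation, and it makes the provenance of $\Gamma(-k')$, $\Gamma(-m')$ and of the monomials $z^{-c_{\bx}/N-1-2k/N}$ immediate. What the paper's route buys is the explicit intermediate closed forms for the single-sensor densities of $Z_{\bx}^{-N/2}$ and $Z_{\by}^{-M/2}$, which are reused for the single-sensor result \eqref{eqn:probability_of_detection_single}. The outstanding work you list is real but routine, and note that the coincident-pole issue you flag (when $k'$ or $m'$ is a nonnegative integer, so $\Gamma(-k')$ diverges) is not an artifact of your method: it is implicitly present in the paper as well, through the vanishing denominators $\tfrac{M}{2}(2k+c_{\bx})-\tfrac{N}{2}(2m+c_{\by})$ in its double-sum evaluation, so both proofs tacitly assume the non-resonant case; your double-pole limiting argument would actually cover ground the paper leaves implicit.
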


\begin{proof}
The outline of the proof that appears in Appendix~\ref{app:Proof_2} is as
follows: 1) construct the joint distribution of $U = R_{\by}+S_{\by}$ and $W =
S_{\by}$, 2) find moment generating function $\Mc_{\frac{W}{U}}(s)$
of random variable $W/U$, 3) find the PDF of
$Z_{\by}^{-1} = W/U$ using inverse Laplace transform of
$\Mc_{\frac{W}{U}}$, 4) apply random variable
transformation to calculate the distribution of $Z_{\by}^{-M/2}$, 5)
repeat these steps for $Z_{\bx}^{-N/2}$, 6) find the distribution of
$Z_{\bx}^{N/2} Z_{\by}^{M/2}$ via multiplicative convolution and
reciprocal transformation.
\end{proof}

Note that $1-P_{Z |\Hc_1}(z|\Hc_1)$ gives us the probability of
detection for the fused decision rule.

\section{Discussion of Results}
\label{sec:Discussion_of_Results}

In the previous section we have derived the expressions for the
probability of false alarm and the probability of detection for the
fused GLR based decision rule developed in
Section~\ref{sec:GLRT_based_fusion_rule}. The expression for the
probability of false alarm (see
eq.~\eqref{eqn:probability_of_false_alarm}) can be used to set the
detection threshold for the test~\eqref{eq:Tst} using Neyman-Pearson
criterion. The expression for the probability of detection (see
eq.~\eqref{eqn:probability_of_detection_fused}) can be used to
analyze the performance of the fused detection rule.

Note that the probability of false alarm of a single sensor can be
calculated using
expression~\eqref{eqn:probability_of_false_alarm_single} for sensor
$\bx$ (and similar one for sensor $\by$). Using technique developed
in Section~\ref{ssec:Test_statistic_under_H1} we can also derive the
expression for the $\Hc_1$ CDF for a single sensor:
\begin{align} \label{eqn:probability_of_detection_single}
P_{Z_{\bx} |\Hc_1}(z|\Hc_1) & = 1 -
\frac{C_{\bx}}{z^{\frac{c_{\bx}}{2}}} \sum_{k=0}^\infty
\frac{z^{-k}}{k!} \frac{1}{\frac{c_{\bx}}{2}+k} G^{2,1}_{3,4}\left(
\frac{\lambda_{\bx}}{2} \left|
\begin{matrix} \ba_{\bx}, 0 \\
k, \bb_{\bx} \end{matrix} \; \right.  \right).
\end{align}

\begin{figure*}[t]  
\centering \includegraphics[width =
7.5cm]{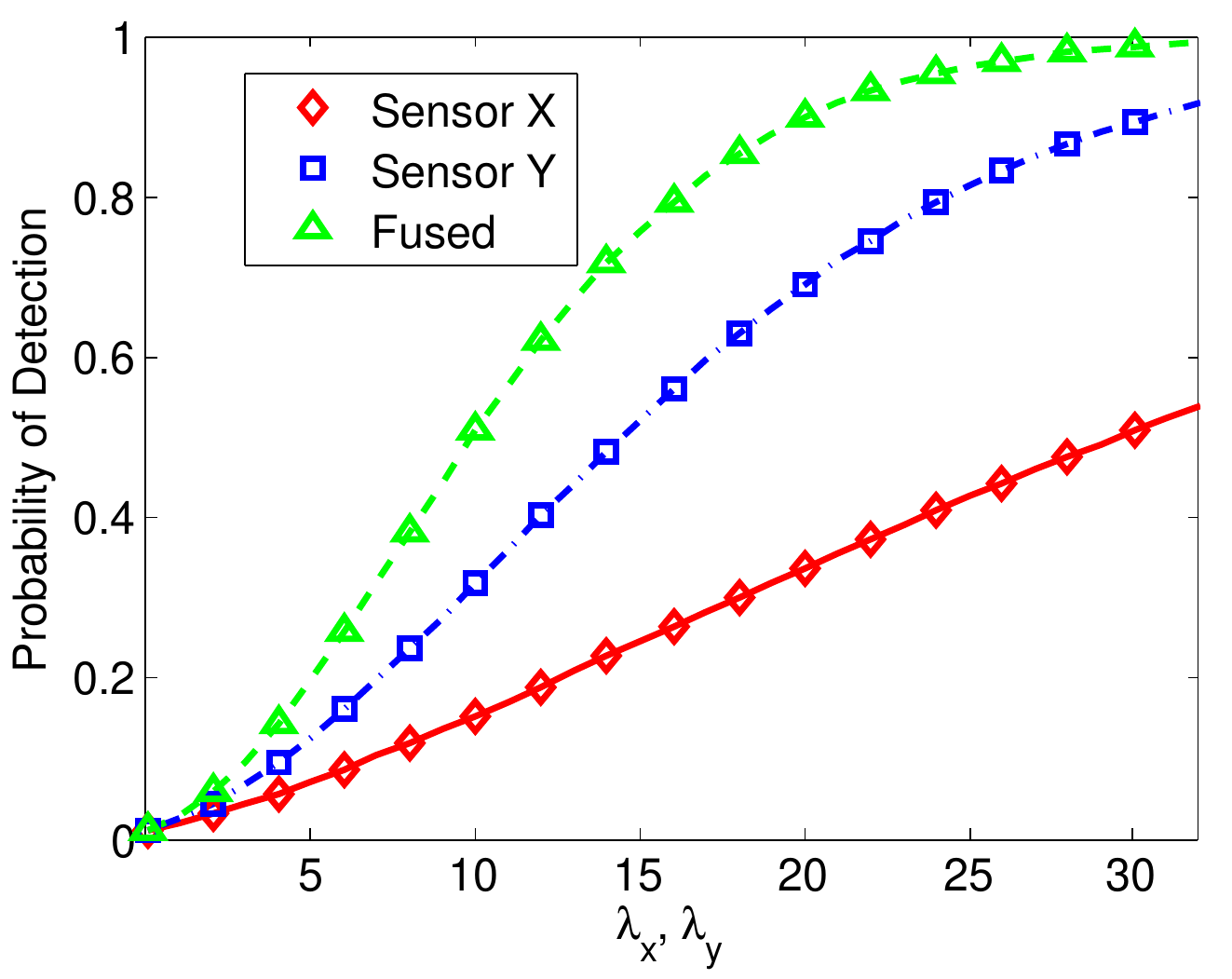} \caption{Probability of
detection for the probability of false alarm fixed at 0.01.}
\label{fig:detection_characteristics}
\end{figure*}

Next we provide the following illustrative performance analysis
example. We fix the number of samples for sensor outputs, $N=6$ and
$M=16$. We fix the signal subspace degrees of freedom $c_{\bx} =
2$ and $c_{\by} = 3$ (sensor $\bx$ has two unknown parameters and
sensor $\by$ has three unknown parameters). We vary the
noncentrality parameters $\lambda_{\bx}$ and $\lambda_{\by}$ in the
range $(0; 30]$. The resulting detection probability curves for the
probability of false alarm fixed at the level $0.01$ obtained using
equations~\eqref{eqn:probability_of_false_alarm},
\eqref{eqn:probability_of_detection_fused},
\eqref{eqn:probability_of_false_alarm_single}
and~\eqref{eqn:probability_of_detection_single} are shown in
Fig.~\ref{fig:detection_characteristics}.

Figure~\ref{fig:detection_characteristics} demonstrates that fusion
provides significant gain in terms of detection reliability even in
the case when one of the fused sources has significantly better
detection characteristics than the other. In other words, it seems
that adding even a relatively weak detector to the fusion rule may
result in significant improvement in joint detection performance.

\section{Concluding Remarks}
\label{section::Concluding Remarks}

In this paper we developed the fusion rule for joint detection with
parametric signal uncertainty and noise nuisance parameters. We
considered classical linear observation model that includes many
practical detection problems as special cases. In our model we also
incorporated uncertainty regarding noise variance. Within this
framework we derived the fusion rule based on the generalized
likelihood ratio paradigm and obtained the expressions
characterizing probability of false alarm and probability of
detection for the derived fusion rule. Analytical expressions
developed in this paper provide important research tools. From the
theoretical standpoint, they form a basis for analytical
manipulation and general study of fused distributions. From the
practical point of view, our expressions provide guidelines for
building fusion architecture and tools for direct numerical
evaluation of detection performance of this architecture in a
situation with concrete fixed parameters of the individual sensors
constituting the joint detection system. In the future we would like
to extend our current results by considering the joint GLRT
detection problem with completely unknown covariance matrices.

\appendix

\section{Useful formulae}\label{app:useful_formulae}

\subsection{Expressions for the ML estimators} \label{app:MLEs}

The MLEs of the unknown parameters can be shown to be:
\begin{align}
    \hat{\theta}_{\bx} &= \left( \bH_{\bx}^T \bR_{\bxi}^{-1} \bH_{\bx} \right)^{-1} \bH_{\bx}^T \bR_{\bxi}^{-1} \bx;~
    \hat{\theta}_{\by} = \left( \bH_{\by}^T \bR_{\bzeta}^{-1} \bH_{\by} \right)^{-1} \bH_{\by}^T \bR_{\bzeta}^{-1} \by ; \\
    \hat{\sigma}^2_{\bxi,1} &= \frac{1}{N} \bx^T \bR_{\bxi}^{-1/2} \mathbf{P}_{\bx}^{\bot} \bR_{\bxi}^{-1/2} \bx ;~
    \hat{\sigma}^2_{\bxi,0} = \frac{1}{N}\bx^T \bR_{\bxi}^{-1} \bx ;  \\
    \hat{\sigma}^2_{\bzeta,1} &= \frac{1}{M} \by^T \bR_{\bzeta}^{-1/2} \mathbf{P}_{\by}^{\bot} \bR_{\bzeta}^{-1/2} \by ;~
    \hat{\sigma}^2_{\bzeta,0} = \frac{1}{M}\by^T \bR_{\bzeta}^{-1} \by ;
\end{align}
In the expressions above $\mathbf{P}_{\bx}^{\bot} = \bI -
\mathbf{P}_{\bx}$, $\mathbf{P}_{\by}^{\bot} = \bI -
\mathbf{P}_{\by}$ and the signal projection matrices
$\mathbf{P}_{\bx}$ and $\mathbf{P}_{\by}$ are given by
$\mathbf{P}_{\bx} = \bR_{\bxi}^{-1/2} \bH_{\bx} \left( \bH_{\bx}^T
\bR_{\bxi}^{-1} \bH_{\bx} \right)^{-1} \bH_{\bx}^T
\bR_{\bxi}^{-1/2}$ and $\mathbf{P}_{\by} = \bR_{\bzeta}^{-1/2}
\bH_{\by} \left( \bH_{\by}^T \bR_{\bzeta}^{-1} \bH_{\by}
\right)^{-1} \bH_{\by}^T \bR_{\bzeta}^{-1/2}$ respectively. It is
straightforward to verify that $\mathbf{P}_{\bx}^{\bot}
\mathbf{P}_{\bx}^{\bot} = \mathbf{P}_{\bx}^{\bot}$,
$\mathbf{P}_{\by}^{\bot} \mathbf{P}_{\by}^{\bot} =
\mathbf{P}_{\by}^{\bot}$; $\mathbf{P}_{\bx} \mathbf{P}_{\bx} =
\mathbf{P}_{\bx}$, $\mathbf{P}_{\by} \mathbf{P}_{\by} =
\mathbf{P}_{\by}$; and $\mathbf{P}_{\bx}^{\bot} \mathbf{P}_{\bx} =
\bZ$, $\mathbf{P}_{\by}^{\bot} \mathbf{P}_{\by} = \bZ$.
\subsection{Meijer-G function identities} \label{app:MeijerG}

These and many other identities can be readily found in~\cite{Adamchik90,Prudnikov2003,Erdelyi1953}.
\begin{align}
x^t G^{m,n}_{p,q}\left( x \left|
\begin{matrix} \ba_p \\
\bb_q
\end{matrix} \; \right.  \right) = G^{m,n}_{p,q}\left( x \left|
\begin{matrix} \ba_p+t \\
\bb_q+t
\end{matrix} \; \right.  \right),
\end{align}
\begin{align} \label{eqn:G-function_inversion_identity}
G^{m,n}_{p,q}\left( \frac{1}{x} \left|
\begin{matrix} \ba_p \\
\bb_q
\end{matrix} \; \right.  \right) = G^{n,m}_{q,p}\left( x \left|
\begin{matrix} 1-\bb_p \\
1-\ba_q
\end{matrix} \; \right.  \right),
\end{align}
\begin{align} \label{eqn:G-function_definite_integration_identity}
\int_{0}^\infty &x^{\alpha-1} G^{s,t}_{u,v}\left( \sigma x \left|
\begin{matrix} \bc_u \\
\bd_v
\end{matrix} \; \right.  \right) G^{m,n}_{p,q}\left( \omega x^{\frac{\ell}{k}} \left|
\begin{matrix} \ba_p \\
\bb_q
\end{matrix} \; \right.  \right) d x = \frac{k^\mu
\ell^{\rho+\alpha(v-u)-1}\sigma^{-\alpha}}{(2\pi)^{b^\star(\ell-1)+c^\star(k-1)}}
\nonumber\\
&\times G^{km+\ell t,kn+\ell s}_{kp+\ell v,kq+\ell u}\left(
\frac{\omega^k k^{k(p-q)}}{\sigma^\ell \ell^{\ell(u-v)}} \left|
\begin{matrix} \be_{kp+\ell v}  \\
\bbf_{kq+\ell u}
\end{matrix} \; \right.  \right),
\end{align}
where $b^\star = s+t-(u+v)/2$, $\rho = \sum_{j=1}^v d_j -
\sum_{j=1}^u c_j +(u+v)/2+1$, $c^\star = m+n-(p+q)/2$, $\rho =
\sum_{j=1}^q b_j - \sum_{j=1}^p a_j +(p+q)/2+1$, $\be_{kp+\ell v} =
[\Delta(k,a_1), \ldots, \Delta(k,a_n), \Delta(\ell, 1-\alpha-d_1),
\ldots, \Delta(\ell, 1-\alpha-d_v), \Delta(k,a_{n+1}), \ldots,
\Delta(k,a_p)]$, $\bbf_{kq+\ell u} = [\Delta(k,b_1), \ldots,
\Delta(k,b_m), \Delta(\ell, 1-\alpha-c_1), \ldots, \Delta(\ell,
1-\alpha-c_u), \Delta(k,b_{m+1}), \ldots, \Delta(k,b_q)]$. Here we
have utilized the following notation: $\Delta(k,a_j) =
\frac{a_j}{k}, \frac{a_j+1}{k}, \ldots, \frac{a_j+k-1}{k}$.

We close the list of useful G-function formulae with the indefinite
integration expression:
\begin{align} \label{eqn:G-function_indefinite_integration_identity}
\int_{0}^y &x^{\alpha-1} G^{m,n}_{p,q} \left( \omega x \left|
\begin{matrix} \ba_p \\
\bb_q
\end{matrix} \; \right.  \right) d x = y^\alpha G^{m,n+1}_{p+1,q+1}\left(
\omega y \left|
\begin{matrix} a_1,\ldots,a_n, 1-\alpha,a_{n+1},\ldots,a_{p}  \\
b_1,\ldots,b_m, -\alpha,b_{m+1},\ldots,b_{q}
\end{matrix} \; \right.  \right).
\end{align}

\section{Proof of Theorem 4.1} \label{app:Proof_1}

First denote $U = R_{\by} + S_{\by}$ and $W = S_{\by}$. Using the
Jacobian method for random variable transformation one can show that
\begin{equation}
p_{W, U}(w, u) = p_{X}(u-w) p_{Y}(w).
\end{equation}
Taking into account the fact that $p_{R_{\by}}(x) = \frac{1}{\Gamma(\frac{d_{\by}}{2})
2^{\frac{d_{\by}}{2}-1}} x^{\frac{d_{\by}}{2}-1} e^{-\frac{x}{2}}$
and $p_{S_{\by}}(x) = \frac{1}{\Gamma(\frac{c_{\by}}{2})
2^{\frac{c_{\by}}{2}-1}} y^{\frac{c_{\by}}{2}-1} e^{-\frac{x}{2}}$
are central chi-square distributions with degrees of freedom
$c_{\by}$, $d_{\by}$ and substituting these into the previous
expression we obtain:
\begin{align}
p_{W, U}(w, u) = \frac{(u-w)^{\frac{d_{\by}}{2}-1}
w^{\frac{c_{\by}}{2}-1} e^{-\frac{u}{2}}}{\Gamma(\frac{d_{\by}}{2})
\Gamma(\frac{c_{\by}}{2}) 2^{\frac{c_{\by}}{2} +
\frac{d_{\by}}{2}}}, \quad 0 < w < u.
\end{align}

This expression exactly corresponds to the McKay's bivariate gamma
distribution (Kellogg and Barnes, \cite[p. 213]{Kellogg1987}) if we
set the parameters of this distribution $a=1/2$,
$p=\frac{c_{\by}}{2}$, $q=\frac{d_{\by}}{2}$ (here we refer to the
Kellogg and Barnes' original notation). It thus can be represented
as the bivariate H-function (\cite[p. 213]{Kellogg1987})
\begin{align}
p_{W, U}(w, u) = \frac{(1/2)^2}{\Gamma(\frac{c_{\by}}{2})}
H^{1,0,0,0,1,0}_{1,1,0,0,0,1}\left(
\begin{matrix} \frac{1}{ 2}w \\
\\
\frac{1}{ 2} u
\end{matrix} \left|
\begin{matrix} (\frac{c_{\by}}{2}+\frac{d_{\by}}{2}-2, 1) \\
(\frac{c_{\by}}{2}+\frac{d_{\by}}{2}-1, 1), - \\
-\\
(\frac{c_{\by}}{2}-1, 1), -
\end{matrix} \; \right.  \right).
\end{align}
We can now find the distribution of random variable $V = W U^{-1}$ using Theorem 4.1, case IV (Kellogg
and Barnes \cite[p. 213]{Kellogg1987}):
\begin{align}
p_{V}(v) =
\frac{\Gamma(\frac{c_{\by}}{2}+\frac{d_{\by}}{2})}{\Gamma(\frac{c_{\by}}{2})}
H^{1,0}_{1,1}\left( v \left|
\begin{matrix} (\frac{c_{\by}}{2}+\frac{d_{\by}}{2}-1, 1) \\
(\frac{c_{\by}}{2}-1, 1)
\end{matrix} \; \right.  \right).
\end{align}
Using the relationship between the H-function and the
G-function~\cite[p. 531]{Prudnikov2003} we can further simplify this expression:
\begin{align}
p_{V}(v) =
\frac{\Gamma(\frac{c_{\by}}{2}+\frac{d_{\by}}{2})}{\Gamma(\frac{c_{\by}}{2})}
G^{1,0}_{1,1}\left( v \left|
\begin{matrix} \frac{c_{\by}}{2}+\frac{d_{\by}}{2}-1 \\
\frac{c_{\by}}{2}-1
\end{matrix} \; \right.  \right).
\end{align}
The last expression gives the pdf of
$\frac{S_{\by}}{R_{\by}+S_{\by}}$. To find the pdf of
$Z_{\by}^{M/N}$ we use the fact that $Z_{\by}^{M/N} =
\frac{1}{V^{M/N}}$ and apply the Jacobian transformation method:
\begin{align}
p_{Z_{\by}^{M/N}}(z) = \left. \frac{p_{V}(v)}{|\frac{\partial
z^{M/N}}{\partial v}|} \right|_{v = \frac{1}{z^{N/M}}} = \frac{N}{M}
\frac{p_{V}(\frac{1}{z^{N/M}})}{(z^{N/M})^{M/N+1}}
\end{align}
This results in the following expression:
\begin{align}
p_{Z_{\by}^{M/N}}(z)  &= \frac{N}{M}
\frac{\Gamma(\frac{c_{\by}}{2}+\frac{d_{\by}}{2})}{\Gamma(\frac{c_{\by}}{2})}
\frac{G^{1,0}_{1,1}\left( z^{-\frac{N}{M}} \left|
\begin{matrix} \frac{c_{\by}}{2}+\frac{d_{\by}}{2}-1 \\
\frac{c_{\by}}{2}-1
\end{matrix} \; \right.  \right)}{(z^{N/M})^{M/N+1}} \\
&= \frac{N}{M}
\frac{\Gamma(\frac{c_{\by}}{2}+\frac{d_{\by}}{2})}{\Gamma(\frac{c_{\by}}{2})}
G^{0,1}_{1,1}\left( z^{\frac{N}{M}} \left|
\begin{matrix} 1-\frac{c_{\by}}{2}-\frac{M}{N} \\
1-\frac{c_{\by}}{2}-\frac{d_{\by}}{2}-\frac{M}{N}
\end{matrix} \; \right.  \right).
\end{align}
Similarly, the distribution of $Z_{\bx}$ appears to be:
\begin{align} \label{eqn:probability_of_false_alarm_single}
p_{Z_{\bx}}(z)  =
\frac{\Gamma(\frac{c_{\bx}}{2}+\frac{d_{\bx}}{2})}{\Gamma(\frac{c_{\bx}}{2})}
G^{0,1}_{1,1}\left( z \left|
\begin{matrix} -\frac{c_{\bx}}{2} \\
-\frac{c_{\bx}}{2}-\frac{d_{\bx}}{2}
\end{matrix} \; \right.  \right).
\end{align}
The next step is to find the PDF of $Z = Z_{\bx} Z_{\by}^{M/N}$.
Using the Jacobian technique again one can show that
\begin{align} \label{eqn:PDF_of_product}
p_{Z_{\bx}Z_{\by}^{M/N}}(z) = \int_{0}^z
\frac{p_{Z_{\bx}}(\frac{z}{v}) p_{Z_{\by}^{M/N}}(v)}{v} d v
\end{align}
Substituting the PDFs $p_{Z_{\bx}}$ and $p_{Z_{\by}^{M/N}}$ obtained
in the previous steps, utilizing the fact that $p_{Z_{\bx}}(z/v) = 0
\text{ for } v>z$ and using the G-function identity~\eqref{eqn:G-function_inversion_identity} and
integration formula~\eqref{eqn:G-function_definite_integration_identity} we obtain:
\begin{align}
p_{Z_{\bx}Z_{\by}^{M/N}}(z) &=
\frac{N\Gamma(\frac{c_{\bx}}{2}+\frac{d_{\bx}}{2})
\Gamma(\frac{c_{\by}}{2}+\frac{d_{\by}}{2})}{M\Gamma(\frac{c_{\bx}}{2})
\Gamma(\frac{c_{\by}}{2})} \int_{0}^\infty G^{0,1}_{1,1}\left(
\frac{v}{z} \left|
\begin{matrix} 1+\frac{c_{\bx}}{2}+\frac{d_{\bx}}{2} \\
1+\frac{c_{\bx}}{2}
\end{matrix} \; \right.  \right) \nonumber\\
&\times G^{0,1}_{1,1}\left( v^{\frac{N}{M}} \left|
\begin{matrix} 1-\frac{c_{\by}}{2}-\frac{M}{N} \\
1-\frac{c_{\by}}{2}-\frac{d_{\by}}{2}-\frac{M}{N}
\end{matrix} \; \right.  \right) \frac{d v}{v} \nonumber\\
&= \frac{\Gamma(\frac{c_{\bx}+d_{\bx}}{2})
\Gamma(\frac{c_{\by}+d_{\by}}{2})}{N^{\frac{d_{\bx}}{2}-1}
M^{\frac{d_{\by}}{2}} \Gamma(\frac{c_{\bx}}{2})
\Gamma(\frac{c_{\by}}{2})} \nonumber\\
&\times G^{0,M+N}_{M+N,M+N}\left( z^N \left|
\begin{matrix} \Delta(N,-\frac{c_{\bx}}{2}), \Delta(M,1-\frac{c_{\by}}{2}-\frac{M}{N}) \nonumber \\
\Delta(N,-\frac{c_{\bx}}{2}-\frac{d_{\bx}}{2}),
\Delta(M,1-\frac{c_{\by}}{2}-\frac{d_{\by}}{2}-\frac{M}{N})
\end{matrix} \; \right.  \right).
\end{align}
The final step of the proof is to find the PDF of $Z =
Z_{\bx}^{N/2} Z_{\by}^{M/2}$ by applying the transformation
$(\cdot)^{^{N/2}}$ to the random variable $Z_{\bx} Z_{\by}^{M/N}$.
After some algebra, this results in expression~\eqref{eqn:pdf_H0}.

Finally, we find the corresponding CDF by using the chage of
variable $y = x^2$ in the integral below and by applying the
indefinite G-function integration formula~\eqref{eqn:G-function_indefinite_integration_identity}, resulting in~\eqref{eqn:probability_of_false_alarm}.

\section{Proof of Theorem 4.2} \label{app:Proof_2}

First denote, as previously, $U = R_{\by} + S_{\by}$ and $W =
S_{\by}$. Now recall that the PDFs of $R_{\by}$ and $S_{\by}$ under
$\Hc_1$ can be written as follows:
\begin{align}
p_{R_{\by}|\Hc_1}(x|\Hc_1) &= \frac{1}{2}
e^{-\frac{x+\lambda_{\by}}{2}}
\left(\frac{x}{\lambda_{\by}}\right)^{\frac{d_{\by}}{4}-\frac{1}{2}}
\BesselI_{\frac{d_{\by}}{2}-1}(\sqrt{\lambda_{\by} x}) \nonumber\\
p_{S_{\by}|\Hc_1}(x|\Hc_1) &=
\frac{x^{\frac{c_{\by}}{2}-1}}{2^{\frac{c_{\by}}{2}}\Gamma(\frac{c_{\by}}{2})}
e^{-\frac{x}{2}}.
\end{align}
Here $\BesselI_{\frac{d_{\by}}{2}-1}(\cdot)$ is the modified Bessel
function of the first kind. Using the relationship between this
function and the Meijer G-function~\cite{Wolfram2011} we can write the
$\Hc_1$-hypothesis joint distribution of $U$ and $W$ as follows:
\begin{align}
p_{U,W}(u,w) &= p_{R_{\by}|\Hc_1}(u-w)p_{S_{\by}|\Hc_1}(w) , \quad 0 \leq w < u < \infty \nonumber\\
&= C_1 e^{-\frac{u}{2}} w^{\frac{c_{\by}}{2}-1} G^{1,0}_{1,3}\left(
\frac{\lambda_{\by}(u-w)}{4} \left|
\begin{matrix} \frac{d_{\by}-1}{2} \\
\frac{d_{\by}}{2}-1, 0, \frac{d_{\by}-1}{2}
\end{matrix} \; \right.  \right).
\end{align}
Where $C_1 = \frac{\pi
2^{\frac{d_{\by}}{2}-\frac{c_{\by}}{2}-2}}{e^{\frac{\lambda_{\by}}{2}}
\Gamma(\frac{c_{\by}}{2}) \lambda_{\by}^{^{\frac{d_{\by}}{2}-1}}}$.
Using the definition of Meijer G-function we can write the following
integral representation of the G-function above:
\begin{align}
G^{1,0}_{1,3}\left( \frac{\lambda_{\by}(u-w)}{4} \left|
\begin{matrix} \frac{d_{\by}-1}{2} \\
\frac{d_{\by}}{2}-1, 0, \frac{d_{\by}-1}{2}
\end{matrix} \; \right.  \right) = \frac{1}{2\pi i} \int_{C} g_1(\eta) \left(\frac{\lambda_{\by}}{4}\right)^{-\eta} (u-w)^{-\eta} d\eta.
\end{align}
Since $0 \leq w < u$ this integral can be expanded in the uniformly
convergent series:
\begin{align} \label{eqn:integral_series}
\int_{C} g_1(\eta) \left(\frac{\lambda_{\by}}{4}\right)^{-\eta}
(u-w)^{-\eta} d\eta = \sum_{k=0}^\infty \int_{C} \frac{u^{-k-\eta}
w^{k}}{k!} g_1(\eta) \frac{\Gamma(\eta+k)}{\Gamma(\eta)}
\left(\frac{\lambda_{\by}}{4}\right)^{-\eta} d\eta.
\end{align}
Where the order of integration and summation can be interchanged
because of the uniform convergence.

We can now write down the expression for the moment generating
function of random variable $\frac{W}{U}$
\begin{align}
\Mc_{\frac{W}{U}}(s) = \int_0^{\infty} \int_0^{u} e^{-\frac{w}{u} s}
p_{W,U}(w,u) du \, dw.
\end{align}
The application of~\eqref{eqn:integral_series} and some
reorganization of the above formula lead to
\begin{align} \label{eqn:first_expression_for_mGF}
\Mc_{\frac{W}{U}}(s) = \frac{C_1}{2\pi i} \sum_{k=0}^\infty \int_{C}
\int_{0}^\infty e^{-\frac{u}{2}} \frac{u^{-k-\eta}}{k!} \Jc_k(u,s) d
u\, g_1(\eta) \frac{\Gamma(\eta+k)}{\Gamma(\eta)}
\left(\frac{\lambda_{\by}}{4}\right)^{-\eta} d\eta.
\end{align}
Here we have
\begin{align}
\Jc_k(u,s) &= \int_0^u e^{-\frac{w}{u} s} w^{k+\frac{c_{\by}}{2}-1}
d w \nonumber\\
&=\left[\Gamma(k+\frac{c_{\by}}{2})-\Gamma(k+\frac{c_{\by}}{2}, s)
\right] \left(\frac{s}{u}\right)^{-k-\frac{c_{\by}}{2}}.
\end{align}
Furthermore, since
\begin{align}
\int_0^\infty e^{-\frac{u}{2}}
\left(\frac{s}{u}\right)^{-k-\frac{c_{\by}}{2}} u^{-k-\eta} d u =
s^{-k-\frac{c_{\by}}{2}}
\Gamma(1-\eta+\frac{c_{\by}}{2})2^{1-\eta+\frac{c_{\by}}{2}},
\end{align}
we can denote $C_2 = 2^{1+\frac{c_{\by}}{2}} C_1$ and
simplify~\eqref{eqn:first_expression_for_mGF} as follows:
\begin{align} \label{eqn:simplified_expression_for_mGF}
\Mc_{\frac{W}{U}}(s) &= \frac{C_2}{2\pi i} \sum_{k=0}^\infty
\frac{s^{-k-\frac{c_{\by}}{2}}}{k!}
\left[\Gamma(k+\frac{c_{\by}}{2})-\Gamma(k+\frac{c_{\by}}{2}, s)
\right] \nonumber\\
&\times \int_{C} g_1(\eta) \Gamma(1-\eta+\frac{c_{\by}}{2})
\frac{\Gamma(\eta+k)}{\Gamma(\eta)}
\left(\frac{\lambda_{\by}}{2}\right)^{-\eta} d\eta.
\end{align}

Next we can write down the expression for PDF of random variable
$W/U$ using the fact that it is equal to the inverse Laplace
transform $\Lc^{-1}$ of $\Mc_{\frac{W}{U}}$ and noting that for
$0\leq z\leq1$ we have
\begin{align}
\Lc^{-1}\left\{s^{-k-\frac{c_{\by}}{2}}
\left[\Gamma(k+\frac{c_{\by}}{2})-\Gamma(k+\frac{c_{\by}}{2}, s)
\right]\right\} = z^{k+\frac{c_{\by}}{2}-1},
\end{align}
that leads to the following expression:
\begin{align}
p_{\frac{W}{U}}(z) = \frac{C_2}{2\pi i} \int_{C}
\underbrace{\sum_{k=0}^\infty \frac{z^{k+\frac{c_{\by}}{2}-1}}{k!}
\frac{\Gamma(\eta+k)}{\Gamma(\eta)}}_{z^{\frac{c_{\by}}{2}-1}(1-z)^{-\eta}}
g_1(\eta) \Gamma(1-\eta+\frac{c_{\by}}{2})
\left(\frac{\lambda_{\by}}{2}\right)^{-\eta} d\eta.
\end{align}
Transforming this back to the G-function domain, recalling that
$Z_{\by} = \frac{U}{W}, 1\leq Z_{\by} \leq \infty$ and denoting
$C_{\by} = \frac{\pi
2^{\frac{d_{\by}}{2}-1}e^{-\frac{\lambda_{\by}}{2}}}{\Gamma(\frac{c_{\by}}{2})
\lambda_{\by}^{^{\frac{d_{\by}}{2}-1}}}$ results in:
\begin{align}
p_{Z_{\by}^{-1}}(z) = C_{\by} z^{\frac{c_{\by}}{2}-1}
G^{1,1}_{2,3}\left( \frac{\lambda_{\by}}{2}(1-z) \left|
\begin{matrix} -\frac{c_{\by}}{2},  \frac{d_{\by}-1}{2} \\
\frac{d_{\by}}{2}-1, 0, \frac{d_{\by}-1}{2}
\end{matrix} \; \right.  \right).
\end{align}
By analogy, we have for $Z_{\bx}^{-1}$:
\begin{align}
p_{Z_{\bx}^{-1}}(z) = C_{\bx} z^{\frac{c_{\bx}}{2}-1}
G^{1,1}_{2,3}\left( \frac{\lambda_{\bx}}{2}(1-z) \left|
\begin{matrix} -\frac{c_{\bx}}{2},  \frac{d_{\bx}-1}{2} \\
\frac{d_{\bx}}{2}-1, 0, \frac{d_{\bx}-1}{2}
\end{matrix} \; \right.  \right),
\end{align}
where $C_{\bx} = \frac{\pi
2^{\frac{d_{\bx}}{2}-1}e^{-\frac{\lambda_{\bx}}{2}}}{\Gamma(\frac{c_{\bx}}{2})
\lambda_{\bx}^{^{\frac{d_{\bx}}{2}-1}}}$.

Using the Jacobian method for random variable transformation one can
further show that the PDFs for $Z_{\bx}^{-\frac{N}{2}}$ and
$Z_{\by}^{-\frac{M}{2}}$ take on the form:
\begin{align}
p_{Z_{\bx}^{-\frac{N}{2}}}(z) &= \frac{2 C_{\bx}}{N}
z^{\frac{c_{\bx}}{N}-1} G^{1,1}_{2,3}\left(
\frac{\lambda_{\bx}}{2}(1-z^{\frac{2}{N}}) \left|
\begin{matrix} -\frac{c_{\bx}}{2},  \frac{d_{\bx}-1}{2} \\
\frac{d_{\bx}}{2}-1, 0, \frac{d_{\bx}-1}{2}
\end{matrix} \; \right.  \right), \\
p_{Z_{\by}^{-\frac{M}{2}}}(z) &= \frac{2 C_{\by}}{M}
z^{\frac{c_{\by}}{M}-1} G^{1,1}_{2,3}\left(
\frac{\lambda_{\by}}{2}(1-z^{\frac{2}{M}}) \left|
\begin{matrix} -\frac{c_{\by}}{2},  \frac{d_{\by}-1}{2} \\
\frac{d_{\by}}{2}-1, 0, \frac{d_{\by}-1}{2}
\end{matrix} \; \right.  \right).
\end{align}

We can now find the PDF of random variable $V =
Z_{\bx}^{-\frac{N}{2}} Z_{\by}^{-\frac{M}{2}}$ using formula
analogous to~\eqref{eqn:PDF_of_product} and using the fact that
$0\leq Z_{\bx}^{-\frac{N}{2}} \leq 1$ and $0\leq
Z_{\by}^{-\frac{M}{2}} \leq 1$ results in $V \leq
Z_{\by}^{-\frac{M}{2}} \leq 1$. Upon denoting $\ba_{\bx} =
[-\frac{c_{\bx}}{2},  \frac{d_{\bx}-1}{2}]$, $\bb_{\bx} =
[\frac{d_{\bx}}{2}-1, 0, \frac{d_{\bx}-1}{2}]$ and $\ba_{\by} =
[-\frac{c_{\by}}{2},  \frac{d_{\by}-1}{2}]$, $\bb_{\by} =
[\frac{d_{\by}}{2}-1, 0, \frac{d_{\by}-1}{2}]$ we have
\begin{align}
p_{Z_{\bx}^{-\frac{N}{2}} Z_{\by}^{-\frac{M}{2}}}(z) &= \frac{4
C_{\bx} C_{\by}}{M N} \int_{z}^{1} \frac{u^{\frac{c_{\by}}{M}-1}}{u}
\left(\frac{z}{u}\right)^{\frac{c_{\bx}}{N}-1} G^{1,1}_{2,3}\left(
\frac{\lambda_{\bx}}{2}(1-\left(\frac{z}{u}\right)^{\frac{2}{N}})
\left|
\begin{matrix} \ba_{\bx} \\
\bb_{\bx} \end{matrix} \; \right.  \right) \nonumber\\
&\times G^{1,1}_{2,3}\left(
\frac{\lambda_{\by}}{2}(1-u^{\frac{2}{M}}) \left|
\begin{matrix} \ba_{\by} \\
\bb_{\by}
\end{matrix} \; \right.  \right) d u.
\end{align}
Using the integral representation of the Meijer G-function we can
further write it as
\begin{align}
p_{Z_{\bx}^{-\frac{N}{2}} Z_{\by}^{-\frac{M}{2}}}(z) &= \frac{4
C_{\bx} C_{\by}}{M N} z^{\frac{c_{\bx}}{N}-1} \int_C\int_L
g(\eta,\ba_{\bx}, \bb_{\bx}) g(\omega,\ba_{\by}, \bb_{\by})
\nonumber\\
&\times \left(\frac{\lambda_{\bx}}{2}\right)^{-\eta}
\left(\frac{\lambda_{\by}}{2}\right)^{-\omega} \Ic(z, \eta,
\omega)d\eta\,d\omega
\end{align}
Where the integrand in $\Ic(z, \eta, \omega)$:
\begin{align}
\Ic(z, \eta, \omega) = \int_{z}^{1}
u^{\frac{c_{\by}}{M}-\frac{c_{\bx}}{N}-1}
\left[1-\left(\frac{z}{u}\right)^{\frac{2}{N}}\right]^{-\eta}
\left[1-u^{\frac{2}{M}}\right]^{-\omega} d u.
\end{align}
can be expanded into the double uniformly convergent series since $z
\leq u\leq 1$ and $0\leq z\leq 1$. After changing the order of
integration and summation (valid due to the uniform convergence) and
evaluating the integral this yields:
\begin{align} \label{eqn:intermediate_result_1}
\Ic(z, \eta, \omega) &= \sum_{k=0}^\infty\sum_{m=0}^\infty
\frac{1}{k!m!}
\frac{\Gamma(\eta+k)\Gamma(\omega+m)}{\Gamma(\eta)\Gamma(\omega)}
\int_{z}^{1} u^{\frac{c_{\by}}{M}-\frac{c_{\bx}}{N}-1}
\left(\frac{z}{u}\right)^{\frac{2k}{N}} u^{\frac{2 m}{M}} d u
\nonumber\\
&=  \sum_{k=0}^\infty\sum_{m=0}^\infty
\frac{\Gamma(\eta+k)\Gamma(\omega+m)}{k!m!
\Gamma(\eta)\Gamma(\omega)}
\frac{\frac{MN}{2}(z^{\frac{2m}{M}}z^{\frac{c_{\by}}{M}-\frac{c_{\bx}}{N}}
 -z^{\frac{2k}{N}})}{\frac{M}{2}(2k + c_{\bx}) - \frac{N}{2}(2m +
c_{\by})}.
\end{align}
Changing the order of summation, substituting the following two
expressions
\begin{align}
\sum_{m=0}^\infty \frac{\Gamma(\omega+m)}{m! \Gamma(\omega)}
\frac{\frac{MN}{2}}{\frac{M}{2}(2k + c_{\bx}) - \frac{N}{2}(2m +
c_{\by})} &= \frac{M}{2}
\frac{\Gamma(1-\omega)\Gamma(\frac{c_{\by}}{2}-k\frac{M}{N}
-\frac{c_{\bx}M}{2N})}{\Gamma(1-\omega+\frac{c_{\by}}{2}-k\frac{M}{N}
-\frac{c_{\bx}M}{2N})} \nonumber\\
\sum_{k=0}^\infty \frac{\Gamma(\eta+k)}{k! \Gamma(\eta)}
\frac{\frac{MN}{2}}{\frac{M}{2}(2k + c_{\bx}) - \frac{N}{2}(2m +
c_{\by})} &= -\frac{N}{2}
\frac{\Gamma(1-\eta)\Gamma(\frac{c_{\bx}}{2}-m\frac{N}{M}
-\frac{c_{\by}N}{2M})}{\Gamma(1-\eta+\frac{c_{\bx}}{2}-m\frac{N}{M}
-\frac{c_{\by}N}{2M})}.
\end{align}
into~\eqref{eqn:intermediate_result_1} and returning to the
G-function representation of the Mellin-Barnes integrals we have the
following expression for the PDF of random variable
$Z_{\by}^{-\frac{N}{2}} Z_{\bx}^{-\frac{M}{2}}$:
\begin{align}
p_{Z_{\bx}^{-\frac{N}{2}} Z_{\by}^{-\frac{M}{2}}}(z) &= \frac{2
C_{\bx} C_{\by}}{N} z^{\frac{c_{\bx}}{N}-1}\sum_{k=0}^\infty
\frac{z^{\frac{2k}{N}}}{k!} G^{2,1}_{3,4}\left(
\frac{\lambda_{\bx}}{2} \left|
\begin{matrix} \ba_{\bx}, 0 \\
k, \bb_{\bx} \end{matrix} \; \right.  \right) \nonumber\\
&\times G^{1,2}_{3,4}\left( \frac{\lambda_{\by}}{2} \left|
\begin{matrix} 0, \ba_{\by} \\
\bb_{\by}, k\frac{M}{N} +\frac{c_{\bx}M}{2N}-\frac{c_{\by}}{2}
\end{matrix} \; \right.  \right) \Gamma(\frac{c_{\by}}{2}-k\frac{M}{N}
-\frac{c_{\bx}M}{2N}) \nonumber\\
&+ \frac{2 C_{\bx} C_{\by}}{M}
z^{\frac{c_{\by}}{M}-1}\sum_{m=0}^\infty \frac{z^{\frac{2m}{M}}}{m!}
G^{2,1}_{3,4}\left( \frac{\lambda_{\by}}{2} \left|
\begin{matrix} \ba_{\by}, 0 \\
m, \bb_{\by} \end{matrix} \; \right.  \right) \nonumber\\
&\times G^{1,2}_{3,4}\left( \frac{\lambda_{\bx}}{2} \left|
\begin{matrix} 0, \ba_{\bx} \\
\bb_{\bx}, m\frac{N}{M} +\frac{c_{\by}N}{2M}-\frac{c_{\bx}}{2}
\end{matrix} \; \right.  \right) \Gamma(\frac{c_{\bx}}{2}-m\frac{N}{M}-\frac{c_{\by}N}{2M}).
\end{align}
Finally, using the reciprocal transformation of the random variable
$Z_{\by}^{-\frac{N}{2}} Z_{\bx}^{-\frac{M}{2}}$ and denoting
$m^\prime = m\frac{N}{M} +\frac{c_{\by}N}{2M}-\frac{c_{\bx}}{2}$,
$k^\prime = k\frac{M}{N} +\frac{c_{\bx}M}{2N}-\frac{c_{\by}}{2}$ we
obtain the desired PDF $p_{Z_{\bx}^{\frac{N}{2}}
Z_{\by}^{\frac{M}{2}}}(z)$ in expression~\eqref{eqn:pdf_H1}.

The CDF follows straightforwardly via the term-wise integration of~\eqref{eqn:pdf_H1}
using the fact that $Z\geq1$, which implies
\begin{align}
P_{Z |\Hc_1}(z|\Hc_1) &= \int_1^z p_{Z |\Hc_1}(u|\Hc_1) du.
\end{align}
and evaluation of the last integral leads to~\eqref{eqn:probability_of_detection_fused}.

\bibliographystyle{imsart-nameyear}
\bibliography{bibliography}

%
%
%
%
%
%

\end{document}